\documentclass[10pt,a4paper]{article}

\usepackage{url}
\usepackage{enumitem}
\usepackage{amsmath, amscd, amssymb, amsthm, latexsym,dsfont}
\usepackage{hyperref, textcomp, xcolor}
\usepackage{color}\usepackage{ulem}
\usepackage{url}\usepackage[utf8]{inputenc}
\usepackage[all]{xy}
\usepackage{forest}
\usepackage{tikz}

\usepackage{ulem}

\usepackage{fancybox}

\newtheorem{theorem}{Theorem}[section]
\newtheorem*{theorem*}{Theorem}
\newtheorem{proposition}[theorem]{Proposition}
\newtheorem{lemma}[theorem]{Lemma}
\newtheorem{corollary}[theorem]{Corollary}
\newtheorem{definition}[theorem]{Definition}
\theoremstyle{definition}
\newtheorem{remark}[theorem]{Remark}
\newtheorem{example}[theorem]{Example}
\newtheorem{notation}[theorem]{Notation}

\if 35
\numberwithin{equation}{section}

\theoremstyle{plain}
\newtheorem{theorem}{Theorem}[section]
\newtheorem{lemma}[theorem]{Lemma}
\newtheorem{proposition}[theorem]{Proposition}
\newtheorem{corollary}[theorem]{Corollary}

\theoremstyle{definition}
\newtheorem{definition}[theorem]{Definition}

\fi


\newcommand\+[1]{\mathcal{#1}}
\newcommand\FSB[2] { {\psi}_{#1\rightarrow #2}}

\newcommand\GS[2] { {\gamma}_{#1\rightarrow #2}}

\newcommand{\vl}{{\blacktriangleleft}}
\newcommand{\vr}{{\blacktriangleright}}
\newcommand{\N}{{\mathbb N}}     




\setlength{\textwidth}{390pt}

\begin{document}

\title{The algebra of complete binary trees is affine complete}
\if 35
\author[A. Arnold]{Andr\'e { Arnold}
\address{LABRI, UMR 5800,  Universit\'e  Bordeaux, France.}
\email{andre.arnold@club-internet.fr}

\author{Patrick { C\'egielski}\textsuperscript{2}}
\address{LACL, EA 4219, 
Universit\'e  Paris XII -- IUT de S\'enart-Fontainebleau, France.}
\email{cegielski@u-pec.fr }

\author {Serge { Grigorieff}\textsuperscript{3}}
\address{IRIF, UMR 8243, CNRS \& Universit\'e Paris-Diderot, France.}
\email{seg@irif.fr}

\corrauthor[I. Guessarian]{Ir\`ene { Guessarian}}
\address{IRIF, UMR 8243, CNRS \& Sorbonne Universit\'e, France.}
\email{ig@irif.fr}

\dedicatory{ To the memory of Kate Karagueuzian-Gibbons and Giliane Arnold}
\fi

\author{\renewcommand\thefootnote{\arabic{footnote}}
{Andr\'e { Arnold}\textsuperscript{1}}\quad 
{Patrick { C\'egielski}\textsuperscript{2}}\quad 
{Serge { Grigorieff}\textsuperscript{3}}\\
 {Ir\`ene { Guessarian}\textsuperscript{4}}}

\footnotetext[1]{LABRI, UMR 5800,  Universit\'e  Bordeaux, France.}

\footnotetext[2]{LACL, EA 4219, Universit\'e  Paris XII -- IUT de S\'enart-Fontainebleau, France.}

\footnotetext[3]{IRIF, UMR 8243, CNRS \& Universit\'e Paris-Diderot, France.}
\footnotetext[4]{IRIF, UMR 8243, CNRS \& Sorbonne Universit\'e, France.}
\bibliographystyle{plain}

\maketitle

\begin{center}
To the memory of Kate Karagueuzian-Gibbons and Giliane Arnold
\end{center}

\vspace{-0.3cm}
\begin{abstract}
A function on an algebra  is congruence preserving
if, for any congruence, it maps pairs of congruent elements onto pairs of congruent elements.
We show that on the algebra of complete binary trees  
whose leaves are labeled
by letters of an alphabet containing at least three  letters
a function  is  congruence preserving  
 if and only if it is polynomial.
This exhibits an example of  a non commutative and non associative
affine complete algebra. As far as we know, 
it is the first  example of such an algebra.
\end{abstract}
\maketitle

\vspace{-0.5cm}
\section{Introduction}
A function on an algebra  is congruence preserving
if, for any congruence, it maps pairs of congruent elements onto pairs of congruent elements.
Such functions were introduced in Gr\"atzer  \cite{{gratzerUnivAlg}},  
where they are said to have the ``substitution property.''

A polynomial function on an algebra is a function defined by a term of the algebra using variables, constants and the operations of the algebra.
Obviously, every polynomial function is  congruence preserving. 
In most algebras this inclusion is strict.
A very simple example where the inclusion is strict is the additive algebra of natural integers 
$\langle \N,+\rangle$,  cf. \cite{cgg15}.
Up to the example studied in \cite{cgg}, 
all affine complete algebras studied so far were commutative and associative, see 
\cite{gratzer1962,nobauer1978,gratzer1964,plos}.  
The example in \cite{cgg} is  the free monoid
on an alphabet with at least three letters:  its operation is non commutative but associative.
The present paper is a follow-up of  \cite{cgg} though it does not depend on it.
We here prove that
the free algebra with one binary operation and at least three generators is affine complete, which gives a nontrivial example 
of a non associative and non commutative affine complete algebra.

\vspace{-0.5cm}
\section{Preliminary definitions}
\vspace{-0.2cm}
\subsection{Congruences}
\begin{definition}\label{def1:congA} Let $\+A=\langle A\,,\, *\rangle$ be an algebra equipped with a binary operation $*$.
A {\em congruence} $\sim$ on  $\+ A$ is an equivalence relation on $A$ wich is compatible with the operation $*$, 
 i.e.,
for all $a_i,a'_i\in  A$, if
$a_i\sim a'_i$ for $i=1,2$ then $a_1 *a_2 \sim a'_1* a'_2$.
\end{definition}
\begin{lemma}\label{l:ker} 
Let $\+A=\langle A\,,\, \star\rangle$,  $\+B=\langle B\,,\, *\rangle$ be two algebras with  binary operations $\star$ and $*$, 
and $\theta\colon A\to B$ a homomorphism. 
Then the kernel of $\theta$
defines a congruence $\sim_\theta$ on $A$ by $x\sim_\theta y$ iff $\theta(x)=\theta(y)$.
\end{lemma}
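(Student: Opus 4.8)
The plan is to verify directly that the relation $\sim_\theta$, defined on $A$ by $x\sim_\theta y$ whenever $\theta(x)=\theta(y)$, satisfies the two requirements of Definition~\ref{def1:congA}: that it is an equivalence relation, and that it is compatible with the operation $\star$.

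First I would note that $\sim_\theta$ is an equivalence relation for a trivial reason: it is the preimage under $\theta$ of the equality relation on $B$, so reflexivity, symmetry and transitivity of $\sim_\theta$ are inherited immediately from those of $=$ on $B$. This step is purely formal and requires no hypothesis on $\theta$ beyond being a function.

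The only clause with any content is compatibility, and it is a one-line computation using that $\theta$ is a homomorphism. Assume $x_i\sim_\theta y_i$ for $i=1,2$, that is, $\theta(x_i)=\theta(y_i)$. Using the homomorphism identity $\theta(u\star v)=\theta(u)*\theta(v)$ first to expand and then to contract, one gets $\theta(x_1\star x_2)=\theta(x_1)*\theta(x_2)=\theta(y_1)*\theta(y_2)=\theta(y_1\star y_2)$, hence $x_1\star x_2\sim_\theta y_1\star y_2$. Combined with the previous paragraph, this establishes that $\sim_\theta$ is a congruence on $\+A$.

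I do not expect any genuine obstacle here: the whole proof is an unwinding of the definitions of homomorphism and congruence. The lemma is recorded only because the congruences of the form $\sim_\theta$, kernels of homomorphisms, are precisely the ones that will be exploited in the rest of the paper.
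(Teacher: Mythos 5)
Your proof is correct and is exactly the routine verification the paper has in mind (the paper omits the proof as standard): equivalence follows because $\sim_\theta$ is the pullback of equality on $B$, and compatibility is the one-line computation $\theta(x_1\star x_2)=\theta(x_1)*\theta(x_2)=\theta(y_1)*\theta(y_2)=\theta(y_1\star y_2)$. Nothing to add.
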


\vspace{-0.2cm}
\subsection{Monoids}
\begin{definition} 
Let $\Sigma$ be a nonempty alphabet whose elements are called letters. 
The {\em free monoid}  generated by $\Sigma$ 
is the  algebra $\langle\Sigma^*, \cdot\rangle$. 
Its elements are the finite sequences (or {\em words}) of  elements from $\Sigma$. It is
endowed with the concatenation operation and the unit element is the empty word denoted by $\varepsilon$. 
The free monoid will be abbreviated in $\Sigma^*$ in the sequel.
 
The {\em length} of a word $w\in\Sigma^*$ is the total number of occurrences of letters in $w$ and it is denoted $|w|$.
The  number of occurrences of a letter $b$ in a word $w$ is denoted by $|w|_b$. As usual $ \Sigma^+$ denotes the set $ \Sigma^*\setminus\{\varepsilon\}$.
\end{definition}

\begin{definition}   Let $\Gamma$ be a subset of $\Sigma$. The {\em projection} $\pi_\Gamma$ is the homomorphism
  $\Sigma^*\to \Gamma^*$ which erases all letters not in $\Gamma$ and leaves those in $\Gamma$
  unchanged. 
\end{definition}

 By Lemma \ref{l:ker} the relation $\pi_\Gamma(x)=\pi_\Gamma(y)$ is a congruence. 
We shall use the following homomorphisms on $\Sigma^*$.

\begin{definition}
Let $a\in \Sigma$ and $u\in \Sigma^*$. Then  the {\em substitution}
$\psi_{a\to u}$ is the homomorphism
 $\Sigma^*\to\Sigma^*$ which maps the letter $a$ onto the word $u$ and leaves other letters unchanged.
\end{definition}
%
\vspace{-0.5cm}
\section{Complete binary trees and their congruences}
\vspace{-0.2cm}
\subsection{Complete binary trees}

Let $\Sigma$ be  an alphabet, 
 let  $\Xi=\{\; \vl , \bullet\; ,\vr \;\}$  be   an alphabet disjoint from $\Sigma$
and let $\Theta=\Sigma\cup\Xi$.
 We shall use the interpretation of the free binary algebra 
with generators $\Sigma$ 
as a set of words on the alphabet $\Theta$.

\begin{lemma}\label{l:BTLL}
The free binary algebra with $\Sigma$ as set of generators,
can be seen as the algebra of complete binary trees 
with leaves labeled by letters of \ $\Sigma$
 endowed  with the operation which concatenates two trees 
as the left and right subtrees of a new root. 
It  can also 
be isomorphically
represented by the algebra $\+ {B}=\langle \+T,\star\rangle$  
defined as follows:
\begin{itemize}
\item Its carrier set $ \+ T $ is the least set of non empty words of 
$\Theta ^+$  also called ``trees'', inductively defined by
\begin{enumerate}
\item each letter $a$ in $\Sigma$ is a tree $a$ in $\+ T$
\item if  $t$ and $t'$ are  trees in $\+ T$,  then  the word $\vl \,t \bullet t'\vr$  is a  tree in $\+ T$
\end{enumerate}
\item The binary product operation $\star$ 
defined by: $t\star t' = \vl\, t\bullet t'\vr$
\end{itemize}
\end{lemma}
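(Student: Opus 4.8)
The plan is to establish the statement in two steps: first identify the free binary algebra on $\Sigma$ with the algebra of $\Sigma$-labelled complete binary trees, then identify the latter with $\+B=\langle\+T,\star\rangle$. The first identification is purely definitional: the free binary algebra on $\Sigma$ is the term algebra over the signature with one binary operation symbol and the letters of $\Sigma$ as constants, and by an immediate structural induction such a term is the same thing as a complete binary tree (each internal node having exactly two children) with leaves labelled from $\Sigma$ — the term $a$ being the single-leaf tree, and applying the operation to $t,t'$ being the tree whose root has $t$ as left and $t'$ as right subtree. Under this correspondence the term operation becomes exactly the ``new root'' operation, so I would take this part for granted and concentrate on the isomorphism with $\+B$.

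For the second step I would define a map $\varphi$ from the tree algebra into $\Theta^+$ by structural induction: $\varphi(a)=a$ for $a\in\Sigma$ and $\varphi(\mathrm{node}(t,t'))=\vl\,\varphi(t)\bullet\varphi(t')\vr$. Its range is exactly $\+T$, because the two clauses defining $\varphi$ mirror the two clauses defining $\+T$; and it transports the ``new root'' operation to $\star$, since $\varphi(\mathrm{node}(t,t'))=\vl\,\varphi(t)\bullet\varphi(t')\vr=\varphi(t)\star\varphi(t')$. Hence everything reduces to showing that $\varphi$ is injective, i.e.\ that a word of $\+T$ has a unique reading as a tree. This is the only non-formal point and the main obstacle.

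To prove unique readability I would use a bracket-counting argument: assign weights $w(\vl)=1$, $w(\vr)=-1$, and $w=0$ to $\bullet$ and to every letter of $\Sigma$, extended additively to words. A short induction on $t$ shows that $w(\varphi(t))=0$ while every nonempty proper prefix of $\varphi(t)$ has weight $\ge 1$; in particular $\varphi(t)$ is a single letter when $t$ is a leaf and otherwise begins with $\vl$ and ends with $\vr$. Now take $x\in\+T$ that is not a single letter and write $x=\vl\,y\,\vr$; reading $y$ from left to right, the prefix weight vanishes at exactly three positions — the end of the first $\varphi$-block, the top-level $\bullet$ separating the two blocks, and the end of $y$ — of which only the middle one carries the symbol $\bullet$ (the other two carry $\vr$ or a letter), while every $\bullet$ lying strictly inside one of the two $\varphi$-blocks sits at a prefix of strictly positive weight. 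Thus the top-level $\bullet$ is the unique $\bullet$ in $y$ at which the prefix weight is $0$, which pins down the splitting of $y$, hence the two subtrees, uniquely; injectivity of $\varphi$ then follows by induction on $|x|$, and $\varphi$ is the desired isomorphism. The delicate bookkeeping is precisely this prefix-weight claim and its use to isolate the top-level $\bullet$; everything else is unwinding of definitions.
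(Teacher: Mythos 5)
Your proof is correct. Note, however, that the paper offers no proof of this lemma at all: it is stated as a standard fact (the free algebra on one binary operation is the term algebra, i.e.\ the algebra of $\Sigma$-labelled complete binary trees, and the fully bracketed words over $\Theta$ are just a concrete notation for terms), so there is nothing in the paper to compare your argument against step by step. What you have done is supply the one genuinely non-trivial ingredient that the paper leaves implicit, namely unique readability of the words in $\+T$, and your prefix-weight argument ($w(\vl)=1$, $w(\vr)=-1$, weight $0$ elsewhere; the weight of a tree word is $0$ while every nonempty proper prefix has weight $\ge 1$; hence the top-level $\bullet$ is the unique $\bullet$ at which the running weight inside $\vl\,y\,\vr$ returns to $0$) is the classical way to do this and is carried out correctly, including the induction that pins down the two subtrees and hence injectivity of your representation map. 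Two cosmetic remarks: your representation map is named $\varphi$, which collides with the paper's use of $\varphi$ for the foliage homomorphism, so you should rename it; and the clause ``in particular $\varphi(t)$ begins with $\vl$ and ends with $\vr$'' follows from the definition of the map rather than from the prefix-weight claim, so the ``in particular'' is misleading though harmless.
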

 This product is neither commutative nor associative. 
The elements of  $\+T$ can be viewed as trees with  leaves  labeled  by letters in the alphabet $\Sigma$. See Figure~\ref{fig:tree}.

\begin{definition}
The set  $ \+ S$ of {\em skeletons}  is the least set of words of $\Xi^*$ 
inductively defined by
 (1) $\varepsilon$ is a skeleton,  and  (2)
 if  $s$ and $s'$ are skeletons, then $\vl \,s \bullet s'\vr $  is a skeleton.

\begin{itemize}
\item[--] The {\em skeleton}   of a tree $t$ is  the word $\sigma(t)= \pi_\Xi(t)\in \Xi^*$
 obtained by erasing all letters not in $\Xi$.
  
\item[--] The {\em foliage}  of a tree $t$
  in $\+ T$ is the word $\varphi(t)=\pi_\Sigma(t)\in \Sigma^+$
  obtained by erasing all letters not in $\Sigma$.

  \end{itemize}
\end{definition}

Note that the skeleton of a tree indeed belongs to $\+S$.

\begin{center}
\begin{figure}[h]
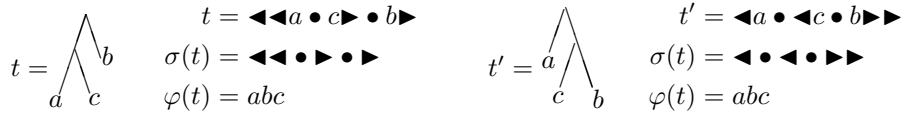

\raisebox{0.5cm}{\hbox{
$t= $\put(4,-7){\line(1,3){10}}\put(0,-12){\shortstack{$a$ }}\put(10,10){\line(1,-3){5.5}} \put(15,-11){\shortstack{$c$ }}
\put(14,23){\line(1,-3){5.5}}\put(20,4){\shortstack{$b$ }}
 }}\qquad\qquad
\begin{minipage}[b]{1.4in}
\begin{align*}  
 t&=\vl\vl a\bullet c\vr\bullet b\vr\\
 \sigma(t)&=\vl\vl\bullet \vr\bullet \vr\\
 \varphi(t)&=abc
\end{align*}
\end{minipage}
\qquad
\raisebox{0.5cm}{\hbox{$t'= $}}
\put(12,40){\line(1,-3){10}}\put(3,17){\shortstack{$a$ }}
\put(10,10){\line(1,3){5.4}} \put(7,4){\shortstack{$c$ }}
\put(6,23){\line(1,3){5.4}}\put(22,2){\shortstack{$b$ }}
\qquad\qquad
\begin{minipage}[b]{1.4in}
\begin{align*}  
 t'&=\vl a\bullet \vl c\bullet b\vr\vr\\
 \sigma(t)&=\vl\bullet\vl\bullet \vr\vr\\
 \varphi(t)&=abc
\end{align*}
\end{minipage}
 \caption{A graphic representation of two trees }\label{fig:tree}
\end{figure}
\end{center}

\if 35
\begin{center}
\begin{figure}[h]
\quad $t$= \raisebox{-1cm}{\begin{forest}
    [ [[$a$][$c$]]
     [$b$]
     ]
 \end{forest}
 \ \ 
 \begin{minipage}[b]{1.4in}
\begin{align*}  
 t&=\vl\vl a\bullet c\vr\bullet b\vr\\
 \sigma(t)&=\vl\vl\bullet \vr\bullet \vr\\
 \varphi(t)&=abc
\end{align*}
\end{minipage}
}
    \qquad\quad
$t'$= \raisebox{-1cm}{\begin{forest}
     [        [$a$ ]   
     [ [$c$] 
       [$b$ ]
       ]
       ]
\end{forest}\ \ 
\begin{minipage}[b]{1.4in}
\begin{align*}  
 t'&=\vl a\bullet \vl c\bullet b\vr\vr\\
 \sigma(t)&=\vl\bullet\vl\bullet \vr\vr\\
 \varphi(t)&=abc
\end{align*}

\end{minipage}
}
 \caption{A graphic representation of two trees }\label{fig:tree}
\end{figure}
\end{center}  
\fi

\begin{proposition}\label{p:homo} For all $t, t'\in \+ T$,

  (1) $\sigma(t\star t')= \vl \sigma(t)\bullet\sigma(t') \vr$,

  (2) $\varphi(t\star t')=  \varphi(t)\varphi(t')$,

(3) $|\sigma(t)|=3|\varphi(t)|-3$.
\end{proposition}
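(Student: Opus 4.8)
The plan is to obtain (1) and (2) directly from the definitions of $\sigma$ and $\varphi$ as projections, and then to deduce (3) by structural induction on $\+T$, feeding (1) and (2) into the induction.

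First, for (1): by definition $t\star t' = \vl\, t \bullet t'\vr$, and $\sigma = \pi_\Xi$ is the homomorphism $\Theta^* \to \Xi^*$ erasing every letter outside $\Xi$. Since $\vl,\bullet,\vr \in \Xi$, applying $\pi_\Xi$ to $\vl\, t\bullet t'\vr$ keeps these three symbols in place and erases, inside the factors $t$ and $t'$, exactly the letters that $\pi_\Xi$ erases there; hence $\sigma(t\star t') = \vl\, \pi_\Xi(t)\bullet\pi_\Xi(t')\vr = \vl\,\sigma(t)\bullet\sigma(t')\vr$. Part (2) is the same argument with $\varphi = \pi_\Sigma$: now the three separator symbols are themselves erased, leaving the plain concatenation $\varphi(t)\varphi(t')$. (Computation (1) also justifies the earlier remark that $\sigma(t)$ indeed lies in $\+S$.)

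Next, for (3), I would induct along the inductive definition of $\+T$ in Lemma~\ref{l:BTLL}. In the base case $t=a\in\Sigma$ we have $\sigma(a)=\varepsilon$ and $\varphi(a)=a$, so $|\sigma(a)|=0=3\cdot 1-3=3|\varphi(a)|-3$. In the inductive case $t=t_1\star t_2$, part (1) gives $|\sigma(t)|=|\sigma(t_1)|+|\sigma(t_2)|+3$ and part (2) gives $|\varphi(t)|=|\varphi(t_1)|+|\varphi(t_2)|$; substituting the induction hypothesis $|\sigma(t_i)|=3|\varphi(t_i)|-3$ for $i=1,2$ yields $|\sigma(t)| = 3(|\varphi(t_1)|+|\varphi(t_2)|)-3 = 3|\varphi(t)|-3$.

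I do not expect a genuine obstacle here: the statement is essentially the bookkeeping fact that one application of $\star$ inserts exactly the three symbols $\vl,\bullet,\vr$ and creates no new leaf, i.e.\ a complete binary tree with $n$ leaves has $n-1$ internal nodes. The only point that wants a moment's care is aligning the base case with the convention that a single letter already counts as a tree, with empty skeleton and foliage of length one, since that is precisely what pins the additive constant at $-3$ rather than $0$.
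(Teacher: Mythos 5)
Your proof is correct and follows the same route the paper intends: (1) and (2) are immediate from the fact that $\sigma=\pi_\Xi$ and $\varphi=\pi_\Sigma$ are monoid homomorphisms, and (3) is the leaf/internal-node count done by structural induction, which is exactly the ``classical result'' the paper's one-line proof invokes. No issues.
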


\begin{proof}
Point (3) is the variant (due to the extra symbols $\vl , \bullet\; ,\vr$) of the classical result that a 
complete binary tree has one more leaf than it has  nodes.
\end{proof}

\begin{proposition} \label{p:sf} 
 (1)  Let $u\in \Sigma^+ $ and $s\in \+ S$ such that $|s|=3|u|-3$. Then there exists a
  unique tree $t=\tau(u,s)$ 
 with foliage $\varphi(t)=u$ and skeleton $\sigma(t)= s$.
  
  (2) If $t$ and $t'$ are such that $\varphi(t)=\varphi(t')$ and $\sigma(t)= \sigma(t')$, then $t=t'$.
\end{proposition}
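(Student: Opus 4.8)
The plan is to prove both parts by induction on the length of the word $u$ (equivalently, on the structure of the skeleton $s$), exploiting the fact that a nonempty skeleton has a unique decomposition $s = \vl s_1 \bullet s_2 \vr$ with $s_1, s_2 \in \+S$. First I would establish the key combinatorial fact underlying uniqueness: if $s \in \+S$ is nonempty, then the decomposition $s = \vl s_1 \bullet s_2 \vr$ with $s_1,s_2\in\+S$ is unique. This is because in such a word the first letter is necessarily $\vl$ and the last is necessarily $\vr$, and the split point between $s_1$ and $s_2$ is forced: $s_1$ must be the shortest nonempty prefix of $s_1 \bullet s_2 \vr$ (after removing the leading $\vl$) that is itself a balanced skeleton, since skeletons, read as bracketings with $\vl$ as open and $\vr$ as close and $\bullet$ as a neutral marker, are properly balanced and a proper prefix of a balanced word is never balanced. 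I would record this as the combinatorial core; once we know skeletons have unique parse trees, everything else follows.

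For part (1), the construction of $\tau(u,s)$: I would argue by induction on $|u|$. If $|u|=1$ then $|s|=0$ so $s=\varepsilon$, and $t=u$ (the single letter) is the unique tree with foliage $u$ and empty skeleton — any nontrivial tree has skeleton starting with $\vl$. If $|u|\ge 2$ then $|s|=3|u|-3\ge 3$, so $s\ne\varepsilon$; write $s=\vl s_1\bullet s_2\vr$ uniquely with $s_1,s_2\in\+S$. A counting argument on $|s_1|,|s_2|$: since $|s_1|+|s_2| = |s|-3 = 3|u|-6$ and each $|s_i|$ has the form $3k_i-3$ for some $k_i\ge 1$ (as $s_i\in\+S$, by Proposition \ref{p:homo}(3) applied to any tree realizing $s_i$, or by a direct induction on $\+S$), we get $k_1+k_2 = |u|$. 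Split $u = u_1 u_2$ with $|u_1|=k_1$, $|u_2|=k_2$; this is forced because $\varphi(t\star t') = \varphi(t)\varphi(t')$ by Proposition \ref{p:homo}(2), so the foliage of the left subtree must be the length-$k_1$ prefix of $u$. By the induction hypothesis there are unique trees $t_1=\tau(u_1,s_1)$ and $t_2=\tau(u_2,s_2)$, and we set $\tau(u,s) = t_1\star t_2$. Proposition \ref{p:homo}(1),(2) confirm $\varphi(t_1\star t_2)=u$ and $\sigma(t_1\star t_2)=s$. Uniqueness: any tree $t$ with these foliage and skeleton data is either a letter (excluded here since $|s|\ge 3$) or of the form $t'\star t''$; then $\sigma(t)=\vl\sigma(t')\bullet\sigma(t'')\vr$ forces $\sigma(t')=s_1$, $\sigma(t'')=s_2$ by uniqueness of skeleton decomposition, which forces the lengths of $\varphi(t'),\varphi(t'')$ as above, hence $\varphi(t')=u_1$, $\varphi(t'')=u_2$, and the induction hypothesis gives $t'=t_1$, $t''=t_2$.

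Part (2) is then essentially a restatement of the uniqueness half of part (1): given $t,t'$ with $\varphi(t)=\varphi(t')=:u$ and $\sigma(t)=\sigma(t')=:s$, Proposition \ref{p:homo}(3) gives $|s|=3|u|-3$, so the hypothesis of part (1) is met, and since $\tau(u,s)$ is the unique tree with foliage $u$ and skeleton $s$, both $t$ and $t'$ equal $\tau(u,s)$, whence $t=t'$. Alternatively one can prove part (2) directly by the same induction, peeling off the outermost $\star$ from both trees. I expect the main obstacle to be the careful bookkeeping in the inductive step — specifically, verifying rigorously that the skeleton decomposition $s=\vl s_1\bullet s_2\vr$ is unique and that the induced splitting of $u$ is forced; the rest is routine once Proposition \ref{p:homo} is invoked. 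The uniqueness of the skeleton parse is the one place where a genuine (if standard) lemma about balanced-bracket words is needed, and I would prove it by a minimal-counterexample / shortest-prefix argument rather than leaving it implicit.
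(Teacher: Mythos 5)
Your proof is correct and follows essentially the same route as the paper: induction on $|u|$, decomposing $s=\vl s_1\bullet s_2\vr$ and splitting $u$ accordingly. The only difference is that you make explicit (and prove) the unique readability of skeletons as balanced-bracket words and the forced length bookkeeping, details the paper's two-line proof leaves implicit.
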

\begin{proof} The proof is by induction on $|u|$.
  If $|u|=1$ then $u=a$, $s=\varepsilon$ and $\tau(a, \varepsilon)=a$.
  If $|u|>1$, there exists $u_1,u_2 \in \Sigma^+, s_1,s_2 \in \+ S$, such that
 $ u=u_1u_2$, $s =\vl s_1\bullet s_2 \vr$  and $|s_i|= 3|u_i|-3$. Hence $\tau(u,s)=
 \tau(u_1,s_1)\star \tau(u_2,s_2)$.
 
 (2) immediately follows from (1).
\end{proof}

\vspace{-0.2cm}
\subsection{Congruences on complete binary trees}
\begin{example}\label{e:cong} 
We shall use later two congruences  defined as kernels of homomorphisms. (1) Equality of skeletons: $t\sim_\sigma t'$ iff  $\sigma(t)=\sigma(t')$. (2) Equality of foliages: $t\sim_\varphi t'$ iff  $\varphi(t)=\varphi(t')$.
\end{example}

Other fundamental congruences are the kernels of the grafting homomorphisms  defined below.
\begin{definition}\label{grafting} 
Let $a\in \Sigma$ and $\tau\in\+ T$.  
Then  ${\GS{a}{\tau}}: \+ T \to \+ T$
 is the homomorphism on the free algebra of trees such that,
for $b\in \Sigma$, the tree ${\GS{a}{\tau}}(b)$ is equal to $\tau$  if $b=a$, and to
  $b$ otherwise.
\end{definition}
\penalty 10000
The following Proposition and Lemma are easily proved by induction on $t$.
\begin{proposition}\label{gtopsi}
For all $\tau, t\in \+ T, a\in \Sigma$, $\varphi(\GS{a}{\tau} (t))= {\FSB{a}{\varphi(\tau)}}(\varphi(t)))$,   i.e., 
the following diagram is commutative: {\small
\[
\begin{CD}
\+T @>\GS{a}{\tau}>> \+T\\
@VV{\varphi}V @VV{\varphi}V\\
\Sigma^* @>\FSB{a}{\varphi(\tau)}>> \Sigma^*     
\end{CD}
\]
}
\end{proposition}

\begin{lemma}\label{d:idempotent}  A grafting  $\GS{a}{\tau} $ is {\em idempotent},
 i.e., $\GS{a}{\tau} \circ \GS{a}{\tau}= \GS{a}{\tau}$,
 if and only if the letter $a$ does not  appear in the foliage $\varphi(\tau)$
\end{lemma}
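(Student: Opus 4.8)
The plan is to reduce idempotence to a statement about generators, then handle the two directions separately, using the foliage as a length invariant to get the obstruction in the harder direction.

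Since $\GS{a}{\tau}$ is a homomorphism of the free algebra $\+B$, and $\+B$ is generated by $\Sigma$, the two homomorphisms $\GS{a}{\tau}\circ\GS{a}{\tau}$ and $\GS{a}{\tau}$ coincide if and only if they agree on every generator $b\in\Sigma$. For $b\neq a$ both send $b$ to $b$, so the only constraint is at $b=a$, where $\GS{a}{\tau}(\GS{a}{\tau}(a))=\GS{a}{\tau}(\tau)$ must equal $\GS{a}{\tau}(a)=\tau$. Hence $\GS{a}{\tau}$ is idempotent if and only if $\GS{a}{\tau}(\tau)=\tau$, and the lemma reduces to characterizing when this holds. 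For the ``if'' direction, assume $a$ does not occur in $\varphi(\tau)$; I would prove by induction on the tree $t$ that $\GS{a}{\tau}$ fixes every tree whose foliage avoids $a$. A leaf $b\neq a$ is fixed by the definition of grafting, and if $\GS{a}{\tau}(t_1)=t_1$ and $\GS{a}{\tau}(t_2)=t_2$, then $\GS{a}{\tau}(t_1\star t_2)=\GS{a}{\tau}(t_1)\star\GS{a}{\tau}(t_2)=t_1\star t_2$; the two subfoliages $\varphi(t_i)$ are factors of $\varphi(t_1\star t_2)$, so they are $a$-free whenever $\varphi(t)$ is, which keeps the induction going. Since $\varphi(\tau)$ avoids $a$, this yields $\GS{a}{\tau}(\tau)=\tau$.

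For the ``only if'' direction I argue by contraposition: suppose $a$ occurs in $\varphi(\tau)$. If $\tau=a$ then $\GS{a}{a}$ is the identity homomorphism, the one degenerate case that must be flagged or excluded; otherwise $\tau$ is not a single leaf, so $n:=|\varphi(\tau)|\geq 2$ while $k:=|\varphi(\tau)|_a\geq 1$. By Proposition~\ref{gtopsi} together with Proposition~\ref{p:homo}(2), $\varphi(\GS{a}{\tau}(\tau))=\FSB{a}{\varphi(\tau)}(\varphi(\tau))$; the substitution $\FSB{a}{\varphi(\tau)}$ replaces each of the $k$ occurrences of $a$ by a word of length $n$ and leaves the remaining $n-k$ letters untouched, so this word has length $(n-k)+kn=n+k(n-1)>n=|\varphi(\tau)|$. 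Therefore $\varphi(\GS{a}{\tau}(\tau))\neq\varphi(\tau)$, whence $\GS{a}{\tau}(\tau)\neq\tau$ and $\GS{a}{\tau}$ is not idempotent.

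The only point requiring care is this passage to foliages in the ``only if'' direction: routing the tree identity $\GS{a}{\tau}(\tau)=\tau$ through $\varphi$ and Proposition~\ref{gtopsi} converts it into a word identity whose failure is already detected by comparing lengths, so no inspection of skeletons is needed. Everything else is a straightforward structural induction.
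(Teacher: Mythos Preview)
Your argument is correct, and in fact more detailed than the paper's, which simply asserts that the lemma is ``easily proved by induction on $t$'' and gives no further justification. Your reduction to generators is the natural way to organize the proof, and your length computation via Proposition~\ref{gtopsi} for the ``only if'' direction is clean.

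You have also correctly flagged a genuine defect in the lemma as stated: when $\tau=a$ the grafting $\GS{a}{a}$ is the identity homomorphism, hence idempotent, yet $a$ certainly occurs in $\varphi(a)=a$. So the ``only if'' direction is literally false without excluding this degenerate case. The paper does not address this, but it is harmless there: every invocation of Lemma~\ref{d:idempotent} in the paper uses only the ``if'' implication (to certify that $\GS{c}{t}$ is idempotent when $c$ is absent from $\varphi(t)$), and that direction is unaffected. Still, your observation is a valid correction to the statement.
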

%
\vspace{-0.5cm}
\section{Congruence preserving functions on  trees}
\label{sec:congr-pres-funct}

We now study congruence preserving functions on the algebra $\langle\+T,\star\rangle$. From now on, $f,\; g$ will be congruence preserving functions on $\+T$.
\begin{definition}\label{def1:cp}
A function $f\colon  \+T \to  \+T$ is {\em congruence preserving}  (abbreviated into CP) if for  all  congruences  $\sim$  on~$\+ T$, for all $t,t'$ in $\+ T$, $t\sim t'\  \Longrightarrow \ f(t)\sim f(t')$.
\end{definition}

We start with a very convenient result.
\begin{proposition}\label{p:G2} Let  $\gamma_a= {\GS{a}{\tau}}$ and $\gamma_b= {\GS{b}{\tau}}$ be  two graftings
 with $a\neq b$. For any $t,t'$ if   $\gamma_a(t) =  \gamma_a(t')$ and $\gamma_b(t) =  \gamma_b(t')$ then $t=t'$
\end{proposition}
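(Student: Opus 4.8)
The plan is to proceed by induction on the combined size $|t|+|t'|$ (equivalently on $|\varphi(t)|+|\varphi(t')|$, using Proposition~\ref{p:homo}(3)), distinguishing cases according to whether $t$ and $t'$ are single letters or compound trees. The base of the argument is the observation that a grafting $\GS{a}{\tau}$ collapses the letter $a$ to $\tau$ but acts \emph{injectively} on any subtree not involving $a$; so if we have \emph{two} graftings at distinct letters $a\neq b$ both identifying $t$ with $t'$, then on the part of $t$ away from $a$ the map $\gamma_a$ is an embedding, and symmetrically for $b$, and no letter can escape both $a$ and $b$ simultaneously being ``hidden.''

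The key steps, in order, would be: (i) First handle the case where one of $t,t'$ is a single letter, say $t=c\in\Sigma$. Then $\gamma_a(t)$ is either $\tau$ (if $c=a$) or $c$ (otherwise), and similarly for $\gamma_b$; since $a\neq b$, at least one of $\gamma_a,\gamma_b$ leaves $c$ fixed as the letter $c$, so from $\gamma_x(t)=\gamma_x(t')$ with that $x$ we learn $\gamma_x(t')=c$. One then argues that the only trees mapped by a grafting to a single letter $c\notin\{\tau\}$ are the letter $c$ itself (a grafting never decreases skeleton length unless it erases nothing nontrivial), forcing $t'=c=t$; the degenerate possibility $\tau=c$ is dispatched by using the \emph{other} grafting. (ii) If both $t=t_1\star t_2$ and $t'=t'_1\star t'_2$ are compound, apply $\sigma$ and Proposition~\ref{p:homo}(1): since $\gamma_a$ is a homomorphism commuting suitably with skeletons, from $\gamma_a(t)=\gamma_a(t')$ one extracts $\gamma_a(t_1)=\gamma_a(t'_1)$ and $\gamma_a(t_2)=\gamma_a(t'_2)$ by comparing the root-level decomposition of the resulting trees (the outermost $\vl\,\bullet\,\vr$ of $\gamma_a(t_1\star t_2)=\gamma_a(t_1)\star\gamma_a(t_2)$ splits uniquely). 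The same holds for $\gamma_b$. Then apply the induction hypothesis to the pairs $(t_1,t'_1)$ and $(t_2,t'_2)$, concluding $t_1=t'_1$, $t_2=t'_2$, hence $t=t'$. (iii) The mixed case — one compound, one a letter — is ruled out because a grafting sends a letter either to a letter or to $\tau$, while it sends a compound tree to a compound tree (its skeleton is nonempty and starts with $\vl$), so $\gamma_a(t)=\gamma_a(t')$ would force $\tau$ to equal a compound tree of the right shape; then the second grafting $\gamma_b$ (which leaves the letter untouched if it differs from $b$, and otherwise is handled by symmetry) gives the contradiction, since $\gamma_b$ also sends the letter to something non-compound while sending the compound tree to something compound, and these cannot both equal $\tau$ unless we are back to a case already closed.

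The main obstacle I expect is the careful bookkeeping in the degenerate subcases of step (i) and step (iii), where $\tau$ itself might be a single letter or might ``accidentally'' have the same skeleton as the other tree — here one genuinely needs \emph{both} graftings at once, and the cleanest way to see this is: if $\gamma_a(t)=\gamma_a(t')$ but $t\neq t'$, then $t,t'$ differ only inside copies of the letter $a$ that got replaced; symmetrically for $b$; but a fixed position in $t$ cannot be occupied by $a$ in one tree and by $b$ in another if $t\neq t'$ forces a mismatch, and one formalizes this by noting $\pi_\Sigma$ together with either grafting's behavior pins down the foliage, then the skeleton, then invokes Proposition~\ref{p:sf}(2). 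I would try to phrase step (ii) so as to absorb the base cases, making the induction essentially a structural one on $t$ with a clean split on the root symbol.
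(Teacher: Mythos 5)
Your proposal is correct and follows essentially the same route as the paper's proof: induction on the sizes of $t$ and $t'$, with the base case (one of them a letter) resolved by choosing the grafting that fixes that letter and dispatching the degenerate possibility $\tau=c$, $t'\in\{a,b\}$ via the \emph{other} grafting, and the inductive step obtained by splitting $\gamma_x(t_1\star t_2)=\gamma_x(t_1)\star\gamma_x(t_2)$ at the root. The only difference is organizational (you separate the mixed letter/compound case, which the paper absorbs into its basis), so nothing further is needed.
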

\begin{proof}
  By induction on the set of pairs $(t,t')$ ordered by; $(t_1,t_2)\leq (t'_1,t'_2)$ 
  iff $|t_i|\leq |t'_i|$ for $i=1,2$. 
  
  {\em Basis} 
{\bf (i) }{\em One of $t,t'$ is a letter $c\in\{a,b\}$.}
 Say $t=a$. 
 Then $\gamma_b(t) = a$ hence $\gamma_b(t')=a$.
 If $b$ does not occur in $t'$ then $\gamma_b(t')=t'$ 
 and equality $\gamma_b(t')=a$ implies $t'=a$ hence $t=t'$,
 as wanted.
 If $b$ does occur in $t'$ then equality $\gamma_b(t')=a$ implies
 $\tau=a$ and $t'=b$. In particular, $\gamma_a(t')=b$ which is in contradiction 
 with $\gamma_a(t')=\gamma_a(t)$ since $\gamma_a(t)=\gamma_a(a)=\tau=a$.
 
{\bf (ii) } {\em One of $t,t'$ is a letter $c\notin\{a,b\}$.}
 Then  $\gamma_a(c) =  \gamma_b(c)=c$, hence $\gamma_a(t') =  \gamma_b(t')=c$ implying $t'=c$ and $t=t'$ as wanted. 

  {\em Induction} Otherwise, we have $t=t_1\star t_2$ and $t'=t'_1\star t'_2$ with
  $(t_i,t'_i)< (t,t')$. hence $\gamma_a(t_1) \star  \gamma_a(t_2)=\gamma_a(t'_1) \star  \gamma_a(t'_2)$ implying $\gamma_a(t_i) =\gamma_a(t'_i) $. Similarly 
 $\gamma_b(t_i) =\gamma_b(t'_i) $. By the induction $t_i=t'_i$  hence $t=t'$.
  \end{proof}
\begin{proposition}\label{p:idG}
If $f$ is CP then 
for every idempotent grafting 
${\GS{a}{t}}$,  we have   $\GS{a}{t}(f(a)) =  \GS{a}{t}(f(t))$.
\end{proposition}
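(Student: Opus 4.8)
The statement to prove is: if $f$ is congruence preserving and $\GS{a}{t}$ is idempotent, then $\GS{a}{t}(f(a)) = \GS{a}{t}(f(t))$. The natural strategy is to exhibit a congruence $\sim$ under which $a \sim t$, and then invoke the CP hypothesis directly to conclude $f(a) \sim f(t)$, finally translating $\sim$ into the claimed equality of the images under $\GS{a}{t}$.

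The plan is to use the congruence $\sim_{\gamma}$ obtained as the kernel of the grafting homomorphism $\gamma = \GS{a}{t}$, which is a congruence by Lemma~\ref{l:ker}. First I would check that $a \sim_\gamma t$, i.e., that $\gamma(a) = \gamma(t)$. By definition of grafting, $\gamma(a) = t$, so I need $\gamma(t) = t$. This is exactly where idempotence enters: by Lemma~\ref{d:idempotent}, $\gamma \circ \gamma = \gamma$, hence applying $\gamma$ to both sides of $\gamma(a) = t$ gives $\gamma(t) = \gamma(\gamma(a)) = \gamma(a) = t$. (Equivalently, idempotence means $a$ does not occur in $\varphi(t)$, so grafting $t$ for $a$ in $t$ changes nothing.) Thus $\gamma(a) = \gamma(t)$, which means $a \sim_\gamma t$.

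Now I apply the CP hypothesis to the congruence $\sim_\gamma$ and the pair $a \sim_\gamma t$: this yields $f(a) \sim_\gamma f(t)$, which by definition of $\sim_\gamma$ is precisely $\gamma(f(a)) = \gamma(f(t))$, i.e., $\GS{a}{t}(f(a)) = \GS{a}{t}(f(t))$, as desired.

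I do not expect a serious obstacle here; the only subtlety is noticing that the relevant congruence is the kernel of the grafting itself, and that idempotence is exactly what makes $t$ a fixed point of that grafting, placing $a$ and $t$ in the same class. Everything else is an unwinding of definitions (Definition~\ref{grafting} for the value $\gamma(a) = t$, Lemma~\ref{l:ker} for the congruence, Definition~\ref{def1:cp} for the CP property).
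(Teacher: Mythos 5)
Your proof is correct and matches the paper's argument essentially verbatim: both use idempotence to get $\GS{a}{t}(a)=\GS{a}{t}(\GS{a}{t}(a))=\GS{a}{t}(t)$, i.e.\ $a$ and $t$ lie in the same class of the kernel congruence of $\GS{a}{t}$, and then apply the CP hypothesis to that congruence. Nothing further to add.
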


\begin{proof}
  As $\GS{a}{t}$ is idempotent we have   
$\GS{a}{t}(a) = \GS{a}{t}(\GS{a}{t}(a))$. 
Now, $\GS{a}{t}(a)=t$ hence $\GS{a}{t}(a) = \GS{a}{t}(t)$. 
Since $\ker(\GS{a}{t})$  is a   congruence and $f$ is CP,  
we conclude $\GS{a}{t}(f(a) )= \GS{a}{t}(f(t))$.
\end{proof}
\begin{corollary}\label{c:idG} If   $f(a)=g(a)$, then  for any idempotent grafting $\GS{a}{t}$, we have  $\GS{a}{t}(f(t))=\GS{a}{t}(g(t))$.
\end{corollary}
\begin{proof}By Proposition \ref{p:idG}
we have,  $\GS{a}{t}(f(t)) =\GS{a}{t}(f(a))$
and $\GS{a}{t}(g(a))=  \GS{a}{t}(g(t))$.
As  $f(a)=g(a)$, we infer $\GS{a}{t}(f(t))=\GS{a}{t}(g(t))$.
\end{proof}
Proposition~\ref{p:idG} and its Corollary tell us that the knowledge of $f(a)$ for all $a\in \Sigma$
gives a lot of information about the value of $f$ on $\+ T$.
The following theorem shows that, in fact,
$f$ is completely determined by its value on $\Sigma$.

\begin{theorem}\label{t:ppal}  Suppose $\Sigma$ has at least three letters,
if $f$ and $g$ are CP  functions on $\+T$ 
  such that for all $a \in  \Sigma$, $f(a)=g(a)$  then for all $t\in\+ T$, $f(t)=g(t)$.
\end{theorem}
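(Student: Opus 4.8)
The plan is to prove the theorem by induction on the size $|t|$ of the tree, using the family of idempotent graftings to "probe" the values $f(t)$ and $g(t)$ and, crucially, Proposition~\ref{p:G2}, which says that knowing the images of a tree under two distinct graftings determines the tree. The base case $t=a\in\Sigma$ is the hypothesis. For the inductive step, write $t=t_1\star t_2$ with $|t_i|<|t|$, so by induction $f(t_i)=g(t_i)$ for $i=1,2$; then $f(t_1)\star f(t_2)=g(t_1)\star g(t_2)$, but this identity alone does not give $f(t)=g(t)$ because $f$ and $g$ need not be homomorphisms. So the real content is to recover $f(t)$ from $f$ evaluated on \emph{strictly smaller} trees together with CP information.

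The key idea is to use Corollary~\ref{c:idG}: since $f(a)=g(a)$ for every letter $a$, we get $\GS{a}{t}(f(t))=\GS{a}{t}(g(t))$ for \emph{every} idempotent grafting $\GS{a}{t}$, i.e., whenever $a$ does not occur in $\varphi(t)$ (Lemma~\ref{d:idempotent}). Since $\Sigma$ has at least three letters, and the foliage $\varphi(t)$ is a fixed finite word, I first reduce to the case where $\varphi(t)$ uses at most two distinct letters: if some letter $b\in\Sigma$ does not occur in $\varphi(t)$, pick a tree $\tau$ strictly larger than any relevant bound, and... actually the cleaner route is: choose two distinct letters $b,c\in\Sigma$ not occurring in $\varphi(t)$ — possible as long as $\varphi(t)$ omits at least two letters. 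If $\varphi(t)$ uses all but at most one letter I would instead exploit that one can still produce two distinct \emph{idempotent} graftings by grafting onto the missing letter, or by a preliminary substitution argument. Granting two distinct idempotent graftings $\GS{b}{\tau}$ and $\GS{c}{\tau'}$ that are injective enough on the subtree in question, Proposition~\ref{p:G2} then forces $f(t)=g(t)$.

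The main obstacle is exactly this case analysis on how many letters $\varphi(t)$ omits, and making Proposition~\ref{p:G2} applicable: that proposition needs $\gamma_a(f(t))=\gamma_a(f(t'))$ \emph{and} $\gamma_b$-equality to conclude $f(t)=f(t')$, and here I want it with $t'$ replaced by "the object $g(t)$" under two \emph{different} idempotent graftings $\GS{b}{\tau}$, $\GS{c}{\tau}$ with $b\ne c$ both absent from $\varphi(t)$ — Corollary~\ref{c:idG} supplies both equalities. Thus the step that needs care is guaranteeing the existence of two distinct letters absent from $\varphi(t)$; when $t$ uses two or three letters this may fail, and one must first apply a congruence collapsing a used letter onto an unused formal slot (or use the congruences $\sim_\sigma$, $\sim_\varphi$ of Example~\ref{e:cong}) to reduce to the generic situation, or do a small separate argument. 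I expect the bulk of the proof to be this reduction, after which the conclusion is immediate from Proposition~\ref{p:G2} and Corollary~\ref{c:idG}.
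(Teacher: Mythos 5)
Your easy case is exactly the paper's: when at least two letters $a\neq b$ are absent from $\varphi(t)$, both graftings $\GS{a}{t}$ and $\GS{b}{t}$ are idempotent, Corollary~\ref{c:idG} gives $\GS{a}{t}(f(t))=\GS{a}{t}(g(t))$ and $\GS{b}{t}(f(t))=\GS{b}{t}(g(t))$, and Proposition~\ref{p:G2} concludes. (Note that Proposition~\ref{p:G2} requires the two graftings to share the \emph{same} grafted tree $\tau$; your phrasing ``$\GS{b}{\tau}$ and $\GS{c}{\tau'}$'' would not satisfy its hypotheses, but the version you use in the end, with a common $\tau=t$, is the right one. Also the preliminary induction on $|t|$ is a dead end, as you yourself observe: since $f$ is not a homomorphism, $f(t_1)=g(t_1)$ and $f(t_2)=g(t_2)$ give nothing about $f(t_1\star t_2)$, and the final argument makes no use of the inductive hypothesis.)

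The genuine gap is the case where $\varphi(t)$ omits at most one letter, which you leave as a list of candidate strategies (``grafting onto the missing letter'', ``a preliminary substitution argument'', ``use $\sim_\sigma$'', ``a small separate argument'') without carrying any of them out --- and none of them works as directly as you suggest. For instance, if $t$ uses every letter, replacing a used letter $a$ by another letter $c$ yields a tree $t'$ with one missing letter, but from $f(t')=g(t')$ you only recover the \emph{single} equation $\GS{a}{c}(f(t))=\GS{a}{c}(g(t))$, and one grafting equation does not determine a tree (Proposition~\ref{p:G2} needs two). The paper closes this gap with a two-step argument you do not have: first, collapsing all letters of $t$ to a single letter $c$ preserves the skeleton, so by the already-settled case and the congruence $\sim_\sigma$ of Example~\ref{e:cong} one gets $\sigma(f(t))=\sigma(g(t))$, reducing the problem (via Proposition~\ref{p:sf}) to showing $\varphi(f(t))=\varphi(g(t))$; second, assuming the foliages first differ at some position, $\varphi(f(t))=wau$ and $\varphi(g(t))=wbv$ with $a\neq b$, one applies a suitable substitution ($\FSB{c}{\varphi(t)}$ when exactly one letter $c$ is missing, or $\FSB{a}{c}$ with $c\notin\{a,b\}$ when no letter is missing --- this is where $|\Sigma|\geq 3$ enters) and reads off a contradiction at that first differing position. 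This first-difference analysis on foliages is the real content of the hard case and is entirely absent from your proposal.
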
 
\begin{proof}

Let $t\notin \Sigma$.
  The proof depends on the number $N(t)$
  of letters of $\Sigma$ which do not appear  in the foliage $\varphi(t)$ of $t$.

{\bf 1. Case $N(t)>1$}\quad
Let $a,b$ be two letters which do not occur in the foliage $\varphi(t)$ of $t$.
  Graftings ${\GS{a}{t}}$ and  ${\GS{b}{t}}$ are idempotent.
  By   Corollary \ref{c:idG}
    we have $\GS{c}{t}(f(t)) =\GS{c}{t}(g(t))$ for $c=a,\;b$,  
and Proposition \ref{p:G2} yields $g(t)=f(t)$.

{\bf 2. Case  { $N(t)\leq 1$}}\quad
Let $c$ be any letter  and 
  let $t_c$ be the tree obtained by substituting $c$ to all letters in $t$. Then
  $N(t)=|\Sigma|-1\geq 2$, and thus $g(t_c)=f(t_c)$.
The trees $t$ and $t_c$ obviously have the same skeleton
hence  $t\sim_\sigma t'$
(cf.  Example \ref{e:cong}  (1)).  As $f$ and $g$ are congruence preserving, we also have $f(t)\sim_\sigma f(t_c) = g(t_c)\sim_\sigma  g(t)$.
As $f(t)$ and $g(t)$ have the same skeleton,  by Proposition~\ref{p:sf}~(2), we know that $f(t)=g(t)$ if and only
if $\varphi(f(t))=\varphi(g(t))$.

Assume by way of contradiction, $\varphi(f(t))\neq\varphi(g(t))$.  By Proposition~\ref{p:homo}, as  $f(t)$ and $g(t)$ have the same skeleton, their foliages have the same length, hence
 $\varphi(f(t))=wau$ and $\varphi(g(t))=wbv$ with $a,b\in\Sigma$, $x\neq y$.

{\em Subcase $N(t)=1$}
Let $c$ be the unique letter which does not appear in $t$.
 Then as ${\GS{c}{t}}$ is idempotent (cf. Lemma \ref{d:idempotent}) we
      have, by Corollary \ref{c:idG},
      ${\GS{c}{t}}(f(t))={\GS{c}{t}}(g(t))$.  By Proposition~\ref{gtopsi},
${\FSB{c}{\varphi(t)}}(\varphi(f(t)))={\FSB{c}{\varphi(t)}}(\varphi(g(t)))$, which implies
${\FSB{c}{\varphi(t)}}(au)={\FSB{c}{\varphi(t)}}(bv)$. This is possible only if one of the two
letters $a,b$ is $c$. But in this case the other letter is the first letter of $\varphi(t)$ which
 cannot be $c$ by the choice of $c$, a contradiction.

\medskip
{\em Subcase $N(t)=0$}\quad 
Since $|\Sigma|\geq 3$ there exists a letter $c\notin\{a,b\}$.  Then ${\GS{c}{t}}$ is idempotent. Let $t'=
{\GS{a}{c}}(t)$.
As $N(t')=1$, we have $f(t') = g(t')$.

But $t$ and $t'$ are congruent for the congruence $\ker({\GS{a}{c}})$, and as $f$ is CP, we  also have
${\GS{a}{c}}(f(t)) =
{\GS{a}{c}}(f(t'))$. Similarly  $ {\GS{a}{c}}(g(t')) = {\GS{a}{c}}(g(t))$. As 
$f(t')=g(t')$, we infer  ${\GS{c}{a}}(f(t)) ={\GS{c}{a}}(g(t))$, implying
 ${ \FSB{a}{c}} (\varphi(f(t))) = { \FSB{a}{c}} (\varphi(g(t)))$: as
$\varphi(f(t))=wau$ and $\varphi (g(t))=wbv$,
we must  also  have $ c = { \FSB{a}{c}}(a) = { \FSB{a}{c}}(b) = b $,  a contradiction.
\end{proof}
%

  \section{The algebra of binary trees is affine complete}
  Throughout this section, $f$ is a fixed CP function on $\+T$.
\vspace{-0.2cm}
\subsection{Polynomials on trees}
\begin{definition} Let $x\not\in\Sigma$ be a variable. A {\em polynomial} $T(x)$  is a tree on the alphabet $\Sigma\cup\{x\}$.
\end{definition}

With every polynomial $T(x)$ we  associate a {\em polynomial function}  $T\colon\+ T\to\+T$ defined by $T(t)={\GS{x}{t}} (T(x))$.
Obviously,  every polynomial
function is CP.

This section is devoted to proving the converse  which amounts to saying that
the algebra $\langle \+ T , * \rangle$ is affine complete.
\begin{theorem}\label{t:CP=aff}
Every   CP function is polynomial.
\end{theorem}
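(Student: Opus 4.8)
The plan is to leverage Theorem~\ref{t:ppal} as the decisive structural tool: since a CP function is completely determined by its values on the letters of $\Sigma$, it suffices to build, for each CP function $f$, a single polynomial $T(x)$ such that the polynomial function $t\mapsto \GS{x}{t}(T(x))$ agrees with $f$ on every letter $a\in\Sigma$. Once such a $T$ is found, Theorem~\ref{t:ppal} forces $f$ and the polynomial function of $T$ to coincide on all of $\+T$, which is exactly the assertion.

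\textbf{Constructing the candidate polynomial.} First I would record the values $f(a)$ for each $a\in\Sigma$ and analyze their skeletons and foliages via Propositions~\ref{p:homo} and~\ref{p:sf}. The key observation I expect to exploit is that, fixing two distinct letters $a,b\in\Sigma$, applying the idempotent graftings and Proposition~\ref{p:idG}/Corollary~\ref{c:idG} pins down $\GS{a}{t}(f(t))$ in terms of $f(a)$, and symmetrically for $b$; Proposition~\ref{p:G2} then recovers $f(t)$ itself whenever two letters are absent from $\varphi(t)$. Running this analysis with $t=x$ treated formally (i.e.\ choosing a fresh letter to play the role of the variable and specializing) should reveal that the ``shape'' of $f$ on a generic tree is dictated by a single tree-with-one-hole, namely the polynomial $T(x)$ obtained by looking at $f$ evaluated on a tree built from a letter not otherwise occurring. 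Concretely, I would pick a letter $c$, consider $f(c)$, and more importantly compare $f$ on trees $t_c$ (all leaves relabeled $c$) with $f$ on $t$ via the skeleton congruence $\sim_\sigma$, exactly as in Case~2 of the proof of Theorem~\ref{t:ppal}; this comparison should let me read off both the skeleton of $T(x)$ and where the variable occurrences sit.

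\textbf{Verifying the candidate works on letters.} Having guessed $T(x)$, I would then check $\GS{x}{a}(T(x)) = f(a)$ for every $a\in\Sigma$. This should reduce to the foliage and skeleton identities: by Proposition~\ref{p:sf}~(2) it is enough to match skeletons (which are forced by the construction of $T$ from a skeleton-preserving congruence argument) and foliages (which reduce, via Proposition~\ref{gtopsi}, to a statement about substitutions $\FSB{x}{a}$ in the free monoid, where the companion paper's techniques or a direct counting argument apply). The delicate bookkeeping is that $T(x)$ must have the right \emph{number} of occurrences of $x$ and the right constant letters interleaved; getting this consistent across all choices of $a$ simultaneously is where the hypothesis $|\Sigma|\ge 3$ is used — it gives enough ``spare'' letters to separate the variable positions from the constant positions, just as in Theorem~\ref{t:ppal}.

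\textbf{Main obstacle.} I expect the hard part to be the \emph{existence and well-definedness} of the polynomial $T(x)$: one must show that the local data $\{f(a)\}_{a\in\Sigma}$, which a priori could be an arbitrary family of trees, is in fact forced to be of the form $\{\GS{x}{a}(T(x))\}_{a\in\Sigma}$ for one fixed $T$. Equivalently, the foliages $\varphi(f(a))$ as $a$ ranges over $\Sigma$ must all arise from a single word by substituting $a$ for a marked letter, and the skeletons must be constant in $a$; proving these rigidity constraints is the technical core. I would extract them by applying CP-ness to the grafting congruences $\ker(\GS{a}{b})$ relating different letters, pushing everything through $\varphi$ to the free monoid, and invoking the monoid-level affine completeness machinery of~\cite{cgg} (or reproving the needed fragment directly), then lifting back to trees through Proposition~\ref{p:sf}. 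After that, the conclusion is immediate from Theorem~\ref{t:ppal}.
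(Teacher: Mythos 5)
Your overall architecture is the paper's: reduce via Theorem~\ref{t:ppal} to producing a single polynomial $T(x)$ with $T(a)=f(a)$ for every $a\in\Sigma$, and extract the existence of $T$ from two rigidity constraints on the family $\{f(a)\}_{a\in\Sigma}$ --- all the $f(a)$ share one skeleton (apply CP to $\sim_\sigma$, since every letter has skeleton $\varepsilon$), and $\GS{a}{b}(f(a))=\GS{a}{b}(f(b))$ for $a\neq b$ (apply CP to $\ker(\GS{a}{b})$, noting $\GS{a}{b}(a)=\GS{a}{b}(b)$). Up to that point you match the paper exactly.

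The gap is that you never prove the rigidity statement you yourself single out as the technical core. The paper does this in Proposition~\ref{p:pol} by induction on the common skeleton: the inductive step is immediate (write $g(a)=g_1(a)\star g_2(a)$ and recurse on the two halves), and all of the content sits in the base case, where each $f(a)$ is a single letter $x_a$ and the constraint $\GS{a}{b}(x_a)=\GS{a}{b}(x_b)$ admits not only $x_a=x_b$ but also the ``swap'' solution $\{x_a,x_b\}=\{a,b\}$; one needs a third letter to show the family cannot mix these alternatives, so that either $x_a=a$ for all $a$ (giving $T=x$) or $x_a$ is one fixed constant. Your sketch neither performs this case analysis nor locates the use of $|\Sigma|\ge 3$ there (you place it in ``separating variable positions from constant positions'' during verification, which is not where this part of the argument needs it). Your fallback of invoking the affine-completeness machinery of \cite{cgg} does not directly apply either: that theorem concerns functions defined and congruence preserving on all of $\Sigma^*$, whereas here one only has the finite family of words $\varphi(f(a))$ indexed by letters and subject to substitution constraints, so what is required is precisely the finitary rigidity lemma, i.e.\ (the foliage shadow of) Proposition~\ref{p:pol}. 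Once that lemma is supplied, the rest of your plan goes through, and the final verification collapses to Proposition~\ref{p:sf}~(2) or holds by construction.
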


By theorem~\ref{t:ppal}, a CP function $f$  is  polynomial if there exists a
polynomial  $T_f(x)$ such that for all $a\in \Sigma$, $f(a)=T_f(a)$. Hence to prove Theorem \ref{t:CP=aff}, we will construct such a polynomial in the next subsection.

\vspace{-0.2cm}
\subsection{The polynomial associated with a CP function}
As  $\sigma(a)=\varepsilon$, for all $a\in\Sigma$,
 if $f$ is CP then all $f(a)$ have the same skeleton. 
For any pair $a,b\in\Sigma$ with $a\neq b$, we have ${\GS{a}{b}}(a)={\GS{a}{b}}(b)$ and hence, by Lemma \ref{l:ker},
${\GS{a}{b}}(f(a))={\GS{a}{b}}(f(b))$.
Thus, the next proposition can be applied to $f$.
\begin{proposition}\label{p:pol}
  Let $g: \Sigma \to \+ T$ such that (1) all $g(a)$ have the same skeleton $s$ and (2) $\forall a\neq
  b$,
  ${\GS{a}{b}}(g(a))={\GS{a}{b}}(g(b))$.
  Then there exists a polynomial $T_g$ such that $ g(a)=T_g(a)$ for all $a\in\Sigma$.
\end{proposition}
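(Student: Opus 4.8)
The plan is to reconstruct the polynomial $T_g$ from its foliage, read off one letter at a time, using the common skeleton $s$ to recover the actual tree. Since all $g(a)$ have skeleton $s$, Proposition~\ref{p:homo}(3) forces all foliages $\varphi(g(a))$ to have a common length, say $n\geq 1$; write $\varphi(g(a))=c_{a,1}\cdots c_{a,n}$ with $c_{a,i}\in\Sigma$. Applying $\varphi$ to hypothesis~(2) and invoking Proposition~\ref{gtopsi} (with $\varphi(b)=b$) gives $\FSB{a}{b}(\varphi(g(a)))=\FSB{a}{b}(\varphi(g(b)))$ for all $a\neq b$; since $\FSB{a}{b}$ sends each letter to a single letter it is length preserving and acts positionwise, so for every position $i$ and all $a\neq b$,
\[
  \FSB{a}{b}(c_{a,i})=\FSB{a}{b}(c_{b,i}).
\]
For a fixed $i$, consider the map $h_i\colon\Sigma\to\Sigma$ defined by $h_i(a)=c_{a,i}$.

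The crux of the proof — the step I expect to be the main obstacle — is to show that each $h_i$ is \emph{either constant or the identity}; this is exactly where the standing hypothesis $|\Sigma|\geq 3$ is used (for $|\Sigma|=2$ a letter-swap at a single position satisfies (1)--(2) but is not polynomial). Suppose $h_i$ is not the identity, say $h_i(a_0)=c$ with $c\neq a_0$; I will show $h_i\equiv c$. Pick any $e\notin\{a_0,c\}$, which exists since $|\Sigma|\geq 3$: the displayed relation at $(a,b)=(a_0,e)$ reads $c=\FSB{a_0}{e}(c)=\FSB{a_0}{e}(h_i(e))$, and as $c\notin\{a_0,e\}$ the only letter mapped to $c$ by $\FSB{a_0}{e}$ is $c$ itself, so $h_i(e)=c$. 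Running this argument over all admissible $e$ shows $h_i(a)=c$ for every $a\neq c$. It remains to check $h_i(c)=c$: the relation at $(a,b)=(c,a_0)$ gives $\FSB{c}{a_0}(h_i(c))=\FSB{c}{a_0}(c)=a_0$, hence $h_i(c)\in\{c,a_0\}$, while the relation at $(a,b)=(c,e)$ gives $\FSB{c}{e}(h_i(c))=\FSB{c}{e}(c)=e$, hence $h_i(c)\in\{c,e\}$; since $a_0\neq e$ these two sets meet only in $\{c\}$, so $h_i(c)=c$ and $h_i$ is constant.

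With this claim, form the word $w=d_1\cdots d_n$ over the alphabet $\Sigma\cup\{x\}$ by setting $d_i=x$ when $h_i$ is the identity and $d_i=$ the constant value of $h_i$ otherwise (the two cases being mutually exclusive since $|\Sigma|\geq 3$). As $|s|=|\sigma(g(a))|=3|\varphi(g(a))|-3=3n-3=3|w|-3$, the construction of Proposition~\ref{p:sf}(1), carried out over the leaf-alphabet $\Sigma\cup\{x\}$, yields a unique tree $T_g=T_g(x)$ with foliage $w$ and skeleton $s$; this $T_g$ is a polynomial. Finally fix $a\in\Sigma$ and compare $T_g(a)={\GS{x}{a}}(T_g(x))$ with $g(a)$. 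Grafting the single leaf $a$ for $x$ does not change the skeleton (an immediate induction on $T_g(x)$ using Proposition~\ref{p:homo}(1)), so $\sigma(T_g(a))=s=\sigma(g(a))$; and by Proposition~\ref{gtopsi}, $\varphi(T_g(a))=\FSB{x}{a}(w)$, whose $i$-th letter is $a=h_i(a)=c_{a,i}$ when $d_i=x$ and $d_i=h_i(a)=c_{a,i}$ when $d_i\in\Sigma$, so $\varphi(T_g(a))=c_{a,1}\cdots c_{a,n}=\varphi(g(a))$. Having equal skeletons and equal foliages, $T_g(a)=g(a)$ by Proposition~\ref{p:sf}(2), which completes the proof.
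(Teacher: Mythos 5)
Your proof is correct, and it reaches the conclusion by a genuinely different route from the paper's. The paper proceeds by induction on the common skeleton $s$: in the inductive step it writes $g(a)=g_1(a)\star g_2(a)$ and recurses, so all the combinatorial content sits in the base case $s=\varepsilon$, where each $g(a)$ is a single letter $x_a$ and the relation ${\GS{a}{b}}(x_a)={\GS{a}{b}}(x_b)$ forces the map $a\mapsto x_a$ to be either constant or the identity. You flatten that induction: you pass to foliages, note that single-letter substitutions act positionwise on words of equal length, establish the same constant-or-identity dichotomy for each positional map $h_i$, and then reassemble $T_g$ from the word $d_1\cdots d_n$ and the skeleton $s$ via Proposition~\ref{p:sf}. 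The combinatorial core is shared --- your analysis of $h_i$ is the paper's base case carried out once per leaf position --- but your version makes two things more explicit. First, the exact role of $|\Sigma|\ge 3$: the Proposition as printed does not list it as a hypothesis, yet it is needed, and your two-letter swap ($g(a)=b$, $g(b)=a$) is a clean witness that the statement fails for $|\Sigma|=2$. Second, the step $h_i(c)=c$: the paper's corresponding sentence (``we would infer \dots that $\{a,c\}=\{c,x_c\}$, hence $x_c=c$'') is terse to the point of appearing to derive $x_c=a$ instead, whereas your intersection argument using a third letter $e$, giving $h_i(c)\in\{c,a_0\}\cap\{c,e\}=\{c\}$, closes that gap properly. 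The price of your approach is a longer write-up and a heavier reliance on the foliage/skeleton machinery (Propositions~\ref{p:homo}, \ref{p:sf} and \ref{gtopsi}) in place of a three-line structural induction; in exchange you get an explicit description of the polynomial ($d_i=x$ at the positions where $g$ copies its argument, a fixed letter elsewhere) rather than one built recursively.
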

\begin{proof}
  The proof is by induction of the size  of the common skeleton $s$.

  {\em Basis} If $s=\varepsilon$ then each $g(a)$ is a letter $x_a$.
 By Hypothesis (2), we have  ${\GS{a}{b}}(x_a)={\GS{a}{b}}(x_b)$. This last equality can happen when (i)
$x_a=x_b$, or (ii) $\{x_a,x_b\}=\{a,b\}$.

We first show that if there exists  an $a$ such that $x_a=c\neq a$ then $\forall b\neq a$, $x_b=c$.
 First for all  $b \neq c$ we have  either  (i) $x_a=x_b$ or (ii) $ \{a,b\}=\{x_a,x_b\}$ which is
impossible since $x_a=c\notin \{a,b\}$. Hence (i) holds and $x_b=x_a=c$.
If $x_c\neq c=x_a$, we would infer from ${\GS{a}{c}}(g(a))={\GS{a}{c}}(g(c))$ that  $\{a,c\}=\{c,x_c\}$,  hence $x_c=c$ holds also for $c$.

 Otherwise, $\forall a,\  x_a=a$, hence $T_g=x$.

{\em Induction} Each $g(a)$ is equal to $g_1(a)\star g_2(a)$.  It is easy to check that both $g_i$
satify Hypothesis  (1) and (2). Hence $T_g=T_{g_1}\star T_{g_2}$.
\end{proof}

\vspace{-0.5cm}
\section{Conclusion}
We proved that, when $\Sigma$ has at least three letters, 
the algebra $\+ B$ of binary trees with leaves labeled by letters of  $\Sigma$ is a non commutative non associative affine complete algebra. This result   extends to non commutative non associative affine complete algebras with unit by adding a unit element to $\+T$. By forgetting skeletons and replacing graftings ${\GS{a}{\tau}}$ with
 substitutions $\psi_{a\to u}$, the results in Sections 4 and 5
go through mutatis mutandis when $\+ B$ is replaced by the free monoid  $\Sigma^*$ on an alphabet $\Sigma$ with at least three letters. 
This yields a  simpler and shorter proof of the main result of \cite{cgg}, i.e., the affine completeness of  $\Sigma^*$. 

 Whether  similar results might hold if $\Sigma$ has only two letters are open problems.
The use of  $\Sigma$ was essential in the proof that $\+ B$ is  affine complete. We do not know whether algebras of trees without labels would still be affine complete.


\vspace{-0.5cm}

\end{document}